\begin{document}
 \title{A Note on Farey Fractions With Odd Denominators}
 \author{Alan Haynes}
 \subjclass{11B57}
 \thanks{This work was done as part of the REU program at
    the University of Illinois at Urbana-Champaign,
    Summer 2001}

 \newtheorem{theorem}{Theorem}
 \newtheorem{lemma}{Lemma}
 \newtheorem{corollary}{Corollary}

  \begin{abstract}
  In this paper we examine the subset
  $\mathcal{F}_{Q, \text{odd}}$
  of Farey fractions of order
  $Q$
  consisting of those fractions whose denominators are odd.
  In particular, we consider the frequencies of the values
  taken on by
  $\Delta =qa'-aq'$ where
  $a/q<a'/q'$
  are consecutive in
  $\mathcal{F}_{Q,\text{odd}}$.
  After proving an asymptotic result for these frequencies,
  we generalize the result to the subset of elements of
  $\mathcal{F}_{Q, \text{odd}}$
  formed by restriction to a subinterval
  $[\alpha , \beta]\subseteq [0,1]$.
  \end{abstract}
 \maketitle

 \section{Introduction}
  For
  $Q\in\mathbb{N}$,
  the Farey fractions of order
  $Q$
  are defined as
  \[\mathcal{F}_Q=\left\{\frac{a}{q}\in\mathbb{Q} :
  0\leq\frac{a}{q}\leq 1, 1\leq q\leq Q,\,\mathop{\mathrm
  {\mathop{\mathrm{GCD}}}}(a,q)=1\right\},\]
  where $\mathop{\mathrm{GCD}}$ denotes the greatest
  common divisor function. Farey fractions have been
  studied in the past mainly for two reasons. First, they
  are important in the field of diophantine approximation.
  Second, there is a connection between Farey fractions
  and the Riemann Hypothesis ([4],[7]). The distribution
  of subsets of $\mathcal{F}_Q$ satisfying congruence
  conditions on the numerators and denominators has not
  been investigated as much, but there is still a significant
  motivation to do so. For example, the analog for Dirichlet
  L-functions of the results of Franel and Landau ([4],[7])
  involve such subsets. It is well known that if
  $a/q<a'/q'$
  are consecutive elements of
  $\mathcal{F}_Q$,
  then
  $qa'-aq'=1$.
  In this paper, we will begin by considering a subset of
  $\mathcal{F}_Q$
  defined as
  \[\mathcal{F}_{Q,\text{odd}}=\left\{\frac{a}{q}\in \mathcal
  {F}_Q : q\text{ odd}\right\} .\]
  In general, if
  $a/q<a'/q'$
  are consecutive elements of
  $\mathcal{F}_{Q,\text{odd}} $,
  the value of
  $\Delta =qa'-aq'$
  need not be one. In view of this we define, for
  $k\in \mathbb{N}$,
  the numbers
  \[N_{Q,\text{odd}}(k)=\#\left\{\frac{a}{q}<\frac{a'}
  {q'}\text{ consecutive in } \mathcal{F}_{Q,\text{odd}} :
  qa'-aq'=k\right\}.\]
  We will show that the asymptotic frequencies
  $\rho _{\text{odd}}(k)$
  defined by
  \[\rho _{\text{odd}}(k)=\lim_{Q\to\infty}
  \frac{N_{Q,\text{odd}}(k)}{\#\mathcal{F}
  _{Q,\text{odd}}}\]
  exist and, further, that they can be computed exactly as
  \[\rho _{\text{odd}}(k)=\frac{4}{k(k+1)(k+2)}.\]
  In the last two sections, we will extend this result to a
  more general set of Farey fractions, consisting of those
  elements of
  $\mathcal{F}_{Q,\text{odd}}$
  which fall in a given subinterval
  $[\alpha , \beta]\subseteq [0,1]$.
  It would be interesting to investigate the following
  questions, which we leave as open problems.
  \begin{enumerate}
  \item Choose two positive integers
        $k_1$
        and
        $k_2$
        and three consecutive elements
        $a/q<a'/q'<a''/q''$
        of
        $\mathcal{F}_{Q,\text{odd}}$.
        In the limit as
        $Q$
        goes to infinity, what is the probability that
        $qa'-aq'=k_1$
        and
        $q'a''-a'q''=k_2$?
  \item What can be deduced about subsets of
        $\mathcal{F}_{Q}$
        with more general congruence conditions on
        the numerators and denominators? Is it possible to
        generalize the above results to include such cases?
  \end{enumerate}

  \section{Preliminary Results}
  Given that
  $a/q<a'/q'<a''/q''$
  are consecutive elements of
  $\mathcal{F}_Q$,
  we will use the following four results.
  \begin{enumerate}
  \item $q+q'>Q$\quad [6, Theorem 30].
  \item At least one of
        $q$ and $q'$
        is odd (follows from the identity
        $qa'-aq'=1$).
  \item There exists an integer
        $r$
        such that
        $q+q''=rq'$
        and
        $a+a''=ra'$\quad [6, Theorem 29].
  \item $q''=\left[ \frac{Q+q}{q'}\right] q'-q$\quad
        (follows from [5, Lemma 1]).
  \end{enumerate}
  For
  $k,Q\in \mathbb{N}$,
  we define the regions
  \[T_{k,Q}=\{ (x,y)\in\mathbb{R}^2 : 0\leq x,y
  \leq Q, x+y>Q, ky\leq Q+x<(k+1)y\}.\]
  Figure 3 in [1] provides a pictorial description of
  these regions, scaled by a factor of Q. To compute
  $\mathop{\mathrm{Area}}(T_{k,Q})$,
  write, for
  $k\in\mathbb{N}$,
  \[P_{1,Q}(k)=\left(\frac{(k-1)Q}{k+1},\frac{2Q}{k+1}
  \right),\quad P_{2,Q}=\left( Q,\frac{2Q}{k+1}\right).\]
  Then we have:
  \begin{eqnarray}
     \mathop{\mathrm{Area}}(T_{k,Q}) &=&\frac{|P
     _{1,Q}(k)-(Q,0)|}{(\sqrt{2} )Q}\times\frac{|P
     _{2,Q}(k-1)-(Q,0)|}{Q}\times\frac{Q^2}{2} \nonumber
  \\ & & -\frac{|P_{1,Q}(k+1)-(Q,0)|}{(\sqrt{2} )Q}\times
     \frac{|P_{2,Q}(k)-(Q,0)|}{Q}\times\frac{Q^2}{2}
     \nonumber
  \\ &=&\frac{4Q^2}{k(k+1)(k+2)}\quad \text{for } k>1,
     \nonumber
  \\ \mathop{\mathrm{Area}}(T_{1,Q}) &=&\frac{Q^2}{6}. \nonumber
  \end{eqnarray}

  \section{Asymptotic Lattice Point Estimates}
  Given a convex region
  $\Omega\in\mathbb{R}^2$
  such that
  $\Omega $
  is contained in the square with vertices
  $(0,0), (Q, 0), (Q, Q),$
  and
  $(0,Q)$,
  define the following numbers.
  \begin{eqnarray}
     M_{\text{odd}}(\Omega ) &=& \#\{ (x,y)\in
     (\mathbb{Z}^2\cap\Omega ) : x \text{ odd}\}, \nonumber
  \\ N(\Omega ) &=& \#\{ (x,y)\in (\mathbb{Z}^2\cap\Omega
     ) : \, \mathop{\mathrm{GCD}}(x,y)=1\}, \nonumber
  \\ N_{\text{odd}}(\Omega) &=& \#\{ (x,y)\in (\mathbb{Z}
     ^2\cap\Omega ) : \, \mathop{\mathrm{GCD}}(x,y)=1, x
     \text{ odd}\}, \nonumber
  \\ N_{\text{odd}, \text{odd}}(\Omega) &=& \#\{ (x,y)\in
     (\mathbb{Z}^2\cap\Omega ): \, \mathop{\mathrm
     {GCD}}(x,y)=1, x \text{ odd}, y \text{ odd}\},
     \nonumber
  \\ N_{\text{odd},\text{even}}(\Omega) &=& \#\{ (x,y)\in
     (\mathbb{Z}^2\cap\Omega ) : \, \mathop{\mathrm
     {GCD}}(x,y)=1,  x \text{ odd}, y \text{ even}\}.
     \nonumber
  \end{eqnarray}
  Note that
  $N_{\text{odd}}(\Omega )=N_{\text{odd},\text
  {odd}}(\Omega )+N_{\text{odd},\text{even}}(\Omega )$.
  Also, define the convex region
  $\Omega ^* =\{(x,y/2) : (x,y)\in\Omega\}$,
  and observe that
  $N_{\text{odd},\text{even}}(\Omega )=N_{\text
  {odd}}(\Omega ^*)$.
  Now we will prove the following result.
  \begin{lemma}
  For any region
  $\Omega\in \mathbb{R}^2$
  as defined above, we have:
  \begin{enumerate}
  \item $N_{\text{odd}}(\Omega )=\frac{4}{\pi ^2}\mathop
        {\mathrm{Area}}(\Omega )+O(Q\log Q).$
  \item $N_{\text{odd}, \text{odd}}(\Omega )=\frac{2}{\pi
        ^2}\mathop{\mathrm{Area}}(\Omega )+O(Q\log Q).$
  \item $N_{\text{odd},\text{even}}(\Omega )=\frac{2}{\pi
        ^2}\mathop{\mathrm{Area}}(\Omega )+O(Q\log Q).$
  \end{enumerate}
  \end{lemma}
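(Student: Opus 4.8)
The plan is to reduce everything to a single standard estimate: for a convex region $\Omega$ contained in the $Q\times Q$ square, the number of lattice points $(x,y)\in\mathbb{Z}^2\cap\Omega$ with $x$ odd and $\gcd(x,y)=1$ satisfies an asymptotic formula with density $4/\pi^2$ and error $O(Q\log Q)$. Part (1) is exactly this statement; parts (2) and (3) will follow from part (1) by a parity/scaling trick, using the relation $N_{\text{odd},\text{even}}(\Omega)=N_{\text{odd}}(\Omega^*)$ already recorded in the text, together with the splitting $N_{\text{odd}}(\Omega)=N_{\text{odd},\text{odd}}(\Omega)+N_{\text{odd},\text{even}}(\Omega)$.

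For part (1), I would proceed by Möbius inversion on the gcd condition. Writing $M_{\text{odd}}(\Omega)$ for the count of lattice points with $x$ odd (no coprimality), the sieve identity gives
\begin{equation}
N_{\text{odd}}(\Omega)=\sum_{\substack{d\ge 1\\ d\text{ odd}}}\mu(d)\,M_{\text{odd}}\!\left(\tfrac{1}{d}\Omega\right),\nonumber
\end{equation}
where only odd $d$ appear because $x$ odd forces $d\mid x$ odd, and $d\le Q$ since $\frac1d\Omega$ must contain a lattice point. The key sub-estimate is that for a convex region $R$ inside a $T\times T$ square, the number of lattice points with $x$ odd equals $\tfrac12\mathrm{Area}(R)+O(T)$; this is the usual lattice-point-count-in-a-convex-region bound (the error being controlled by the perimeter, which is $O(T)$), refined by the observation that restricting to one residue class mod $2$ in the $x$-coordinate halves the main term. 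Applying this with $R=\tfrac1d\Omega$ and $T=Q/d$ yields $M_{\text{odd}}(\tfrac1d\Omega)=\tfrac{1}{2d^2}\mathrm{Area}(\Omega)+O(Q/d)$. Summing against $\mu(d)$ over odd $d\le Q$ gives a main term $\tfrac12\mathrm{Area}(\Omega)\sum_{d\text{ odd}}\mu(d)/d^2=\tfrac12\cdot\tfrac{8}{\pi^2}\cdot\tfrac12\mathrm{Area}(\Omega)$... more carefully, $\sum_{d\text{ odd}}\mu(d)/d^2=\prod_{p\text{ odd}}(1-p^{-2})=\tfrac{1}{\zeta(2)}\cdot(1-2^{-2})^{-1}=\tfrac{6}{\pi^2}\cdot\tfrac43=\tfrac{8}{\pi^2}$, so the main term is $\tfrac{4}{\pi^2}\mathrm{Area}(\Omega)$, and the error is $O\!\big(\sum_{d\le Q}Q/d\big)=O(Q\log Q)$, as claimed.

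For part (3), apply part (1) to $\Omega^*=\{(x,y/2):(x,y)\in\Omega\}$: since $\mathrm{Area}(\Omega^*)=\tfrac12\mathrm{Area}(\Omega)$ and $\Omega^*$ still sits inside the $Q\times Q$ square (its $y$-extent only shrinks), part (1) gives $N_{\text{odd}}(\Omega^*)=\tfrac{4}{\pi^2}\cdot\tfrac12\mathrm{Area}(\Omega)+O(Q\log Q)=\tfrac{2}{\pi^2}\mathrm{Area}(\Omega)+O(Q\log Q)$, and this equals $N_{\text{odd},\text{even}}(\Omega)$ by the identity noted before the Lemma. Then part (2) follows by subtraction: $N_{\text{odd},\text{odd}}(\Omega)=N_{\text{odd}}(\Omega)-N_{\text{odd},\text{even}}(\Omega)=\tfrac{4}{\pi^2}\mathrm{Area}(\Omega)-\tfrac{2}{\pi^2}\mathrm{Area}(\Omega)+O(Q\log Q)=\tfrac{2}{\pi^2}\mathrm{Area}(\Omega)+O(Q\log Q)$.

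The main obstacle is making the convex-region lattice-point bound uniform: one needs $M_{\text{odd}}(R)=\tfrac12\mathrm{Area}(R)+O(T)$ with the implied constant independent of the shape of $R$ (depending only on its being convex and inside a $T\times T$ box). The clean way is to count lattice points on each horizontal line $y=\text{const}$ meeting $R$: convexity makes the intersection an interval, on which the number of odd-$x$ integers is half its length plus $O(1)$; there are $O(T)$ such lines, giving total error $O(T)$, while the sum of half-lengths is $\tfrac12\mathrm{Area}(R)+O(T)$. One should double-check that the scaled regions $\tfrac1d\Omega$ and the sheared region $\Omega^*$ remain convex (they do — affine images of convex sets) and remain within the relevant box, and that the tail of the Möbius sum for $d>Q$ contributes nothing since those dilates contain no lattice points with $x$ odd (indeed no lattice points at all beyond a point).
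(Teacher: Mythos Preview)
Your proposal is correct and follows essentially the same route as the paper: M\"obius inversion over odd moduli applied to the elementary estimate $M_{\text{odd}}(\Omega)=\tfrac12\mathop{\mathrm{Area}}(\Omega)+O(Q)$, the evaluation $\sum_{d\text{ odd}}\mu(d)/d^2=8/\pi^2$, and then the reduction of parts (2) and (3) to part (1) via $N_{\text{odd},\text{even}}(\Omega)=N_{\text{odd}}(\Omega^*)$ together with $\mathop{\mathrm{Area}}(\Omega^*)=\tfrac12\mathop{\mathrm{Area}}(\Omega)$ and subtraction. Your additional remarks (the line-by-line justification of the $M_{\text{odd}}$ estimate and the convexity checks for $\tfrac1d\Omega$ and $\Omega^*$) simply make explicit what the paper leaves as ``obvious.''
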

  \begin{proof}
  Obviously,
  $M_{\text{odd}}(\Omega )=(1/2)\mathop{\mathrm
  {Area}}(\Omega )+O(Q)$.
  Now
  \begin{eqnarray}
     N_{\text{odd}}(\Omega ) &=& \sum_{\substack{n=1 \\ n\text{ odd}}}^Q
     \mu (n)M_{\text{odd}}\left(\frac{\Omega}{n}\right) \nonumber
  \\ &=& \sum_{\substack{n=1 \\ n\text{ odd}}}^Q \mu (n)\left( \frac
     {\mathop{\mathrm{Area}}(\Omega /n)}{2}+O\left(
     \frac{Q}{n}\right) \right) \nonumber
  \\ &=& \left( \frac{\mathop{\mathrm{Area}}(\Omega)}
     {2}\sum_{\substack{n=1 \\ n\text{ odd}}}^\infty
     \frac{\mu (n)}{n^2}\right) +O(Q\log Q), \nonumber
  \end{eqnarray}
  where
  $\mu (n)$
  is the M\"{o}bius function. Using the Euler product
  formula for the Riemann Zeta function, we obtain the
  formula
  \[\sum_{\substack{n=1 \\ n\text{ odd}}}^\infty\frac
  {\mu (n)}{n^2}=\zeta (2)^{-1}\left( \frac{1}{1-2
  ^{-2}}\right)=\frac{8}{\pi ^2}.\]
  Using this formula in the above sum, we see that
  \[N_{\text{odd}}(\Omega)=\frac{4}{\pi ^2}\mathop{\mathrm
  {Area}}(\Omega )+O(Q\log Q).\]
  The other two parts of the proof follow from the remarks
  preceding the statement of the theorem, and the fact that
  $\mathop{\mathrm{Area}}(\Omega ^*)=1/2\mathop{\mathrm
  {Area}}(\Omega )$:
  \begin{eqnarray}
     N_{\text{odd},\text{even}}(\Omega ) &=& N_{\text
     {odd}}(\Omega ^*)=\frac{2}{\pi
     ^2}\mathop{\mathrm{Area}}(\Omega)+O(Q\log Q), \nonumber
  \\ N_{\text{odd},\text{odd}}(\Omega ) &=& N_{\text
     {odd}}(\Omega )-N_{\text{odd},\text{even}}(\Omega
     )=\frac{2}{\pi ^2}\mathop{\mathrm{Area}}(\Omega )+O(Q\log Q). \nonumber
  \end{eqnarray}
  \end{proof}
  In particular, since
  $\mathcal{F}_{Q,\text{odd}}$
  can be defined as
  \[\mathcal{F}_{Q,\text{odd}}=\left\{ (a,q)\in \mathbb{Z}
  ^2 : 1\leq q\leq Q, 0\leq a\leq q,\mathop{\mathrm{GCD}}(a,q)=1, q \text{
  odd}\right\}\]
  we can apply Lemma 1 to obtain
  \[\#\mathcal{F}_{Q,\text{odd}}=\frac{2Q^2}{\pi ^{2}}
  +O(Q\log Q).\]

  \section{Asymptotic Frequencies in the Interval [0,1]}
  When two consecutive elements
  $a/q<a'/q'$
  of
  $\mathcal{F}_Q$
  both have odd denominators then they will also be
  consecutive in
  $\mathcal{F}_{Q,\text{odd}}$,
  and we will have
  $\Delta =qa'-aq'=1$.
  Hence, if we choose two consecutive elements
  $a/q<a''/q''$
  in
  $\mathcal{F}_{Q,\text{odd}}$
  such that
  $\Delta=qa''-aq''=k>1$,
  we know that these fractions are not consecutive in
  $\mathcal{F}_Q$.
  The fact that two even denominators can not occur
  consecutively in
  $\mathcal{F}_Q$
  tells us that there is exactly one fraction between
  $a/q$
  and
  $a''/q''$;
  call it
  $a'/q'$.
  Now, let
  $r\in\mathbb{Z}$
  be the integer such that
  $q+q''=rq'$
  and
  $a+a''=ra'$.
  Then, by solving the systems of equations
  \begin{eqnarray}
     a'(qa'')-aq'a'' &=& a'', \nonumber
  \\ q'(qa'')-qa'q'' &=& q \nonumber
  \end{eqnarray}
  and
  \begin{eqnarray}
     qa'q''-q'(aq'') &=& q'', \nonumber
  \\ aq'a''-a'(aq'') &=& a \nonumber
  \end{eqnarray}
  we get
  \[k=qa''-aq''=\frac{q''+a''+q+a}{q'+a'}=r.\]
  We will use this and previous results to carry out some
  computations. First, we compute
  $N_{Q,\text{odd}}(1)$.
  \begin{eqnarray}
     & &\#\left\{\frac{a}{q}<\frac{a'}{q'} \text{
     consecutive in }\mathcal{F}_Q : q \text{ odd}, q'
     \text{ odd}\right\} \nonumber
  \\ & & \qquad\qquad =\#\{ (q,q') : 1\leq q, q'\leq Q,
     q+q'>Q, \nonumber
  \\ & & \qquad\qquad\qquad\qquad \, \mathop{\mathrm
     {GCD}}(q,q')=1, q \text{ odd}, q' \text{ odd}\}
     \nonumber
  \\ & & \qquad\qquad =\frac{Q^2}{\pi ^2}+O(Q\log Q),
     \nonumber
  \end{eqnarray}
  and
  \begin{eqnarray}
     & & \#\left\{ \frac{a}{q}<\frac{a'}{q'}<\frac{a''}
     {q''} \text{ consecutive in } \mathcal{F}_{Q} : q
     \text{ odd}, q' \text{ even}, q'=q+q''\right\}
     \nonumber
  \\ & & \qquad\qquad =\#\left\{\frac{a}{q}<\frac{a'}
     {q'}\text{ consecutive in } \mathcal{F}_{Q} : \right.
     \nonumber
  \\ & & \qquad\qquad\qquad\qquad\left. q \text{ odd}, q'
     \text{ even}, \left[\frac{Q+q}{q'}\right] =1
     \right\} \nonumber
  \\ & & \qquad\qquad =\#\{ (q,q') : 1\leq q,q'\leq Q,
     q+q'>Q, \, \mathop{\mathrm{GCD}}(q, q')=1, \nonumber
  \\ & &\qquad\qquad\qquad\qquad\qquad\qquad\qquad q'\leq
     Q+q<2q', q \text{ odd}, q' \text{ even}\} \nonumber
  \\ & & \qquad\qquad =\#\left\{ (q, q')\in T_{1,Q} : q
     \text{ odd}, q' \text{ even},\, \mathop{\mathrm{GCD}}(q,q')=1
     \right\} \nonumber
  \\ & & \qquad\qquad =\frac{2}{\pi ^2}\mathop{\mathrm{Area}}(T
     _{1,Q})+O(Q\log Q) \nonumber
  \\ & & \qquad\qquad =\frac{Q^2}{3\pi ^2}+O(Q\log Q).
     \nonumber
  \end{eqnarray}
  So we have
  \begin{eqnarray}
     N_{Q,\text{odd}}(1) &=& \#\left\{\frac{a}{q}<\frac{a'}
     {q'} \text{ consecutive in } \mathcal{F}_Q : q
     \text{ odd}, q' \text{ odd}\right\}  \nonumber
  \\ & & \quad + \#\left\{ \frac{a}{q}<\frac{a'}
     {q'}<\frac{a''}{q''} \text{ consecutive in } \mathcal
     {F}_{Q} : \right. \nonumber
  \\ & & \qquad\qquad\qquad\qquad q \text{ odd}, q'
     \text{ even}, q'=q+q''\bigg\} \nonumber
  \\ &=& \frac{2}{3}\cdot\frac{2Q^2}{\pi ^2}+O(Q\log Q).
     \nonumber
  \end{eqnarray}
  Now, for
  $k>1$,
  \begin{eqnarray}
     N_{Q,\text{odd}}(k)&=&\#\bigg\{ \frac{a}{q}<\frac{a'}
     {q'}<\frac{a''}{q''} \text{ consecutive in } \mathcal
     {F}_{Q} : \nonumber
  \\ & & \qquad\qquad\qquad\qquad q \text{ odd}, q'
     \text{ even}, q+q''=kq'\bigg\} \nonumber
  \\ &=& \#\left\{\frac{a}{q}<\frac{a'}{q'} \text{
     consecutive in }\mathcal{F}_{Q} : q \text{ odd}, q'
     \text{ even}, \left[\frac{Q+q}{q'}\right] =k\right\}
     \nonumber
  \\ &=& \#\{ (q, q') : 1\leq q,q'\leq Q, q+q'>Q,\, \mathop{\mathrm{GCD}}
     (q, q')=1, \nonumber
  \\ & & \qquad\qquad\qquad\qquad kq'\leq Q+q<(k+1)q', q
    \text{ odd}, q' \text{ even}\} \nonumber
  \\ &=& \#\left\{ (q, q')\in T_{k,Q} :\, \mathop{\mathrm
     {GCD}}(q,q')=1,  q \text{ odd}, q' \text{
     even}\right\} \nonumber
  \\ &=& \frac{2}{\pi ^2}\mathop{\mathrm{Area}}(T
     _{k,Q})+O(Q\log Q) \nonumber
  \\ &=& \left( \frac{2Q^2}{\pi ^2}\right) \left( \frac{4}
     {k(k+1)(k+2)}\right) +O(Q\log Q). \nonumber
  \end{eqnarray}
  We formulate this as a theorem.
  \begin{theorem}
  For any positive integer $k$,
  \[N_{Q,\text{odd}}(k)= \left( \frac{2Q^2}{\pi ^2}\right) \left( \frac{4}
     {k(k+1)(k+2)}\right) +O(Q\log Q). \]
  \end{theorem}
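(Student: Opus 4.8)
The plan is to reduce the evaluation of $N_{Q,\text{odd}}(k)$ to a lattice-point count over the region $T_{k,Q}$ of Section 2, and then to apply Lemma 1 together with the area formula $\mathop{\mathrm{Area}}(T_{k,Q})=4Q^2/(k(k+1)(k+2))$ recorded there. The case $k=1$ has to be split off, because a pair with $\Delta=1$ can be consecutive in $\mathcal{F}_{Q,\text{odd}}$ in two structurally different ways.

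First I would treat $k=1$. If $a/q<a'/q'$ are consecutive in $\mathcal{F}_{Q,\text{odd}}$ with $qa'-aq'=1$, then either the two fractions are already consecutive in $\mathcal{F}_Q$ --- in which case $q$ and $q'$ are both odd and, since $q+q'>Q$ and $\mathop{\mathrm{GCD}}(q,q')=1$, the denominator pairs $(q,q')$ are precisely the lattice points with both coordinates odd in the triangle $\Omega=\{(x,y):0\le x,y\le Q,\ x+y>Q\}$, which has area $Q^2/2$ --- or else there is an element of $\mathcal{F}_Q$ strictly between them. In the latter case, using fact (2) (no two even denominators are $\mathcal{F}_Q$-adjacent) together with the fact that no fraction with odd denominator can lie strictly between two consecutive elements of $\mathcal{F}_{Q,\text{odd}}$, the intermediate set is a single fraction $a'/q'$ with $q'$ even and $q+q''=q'$; by fact (4), $q+q''=q'$ is equivalent to $q'\le Q+q<2q'$, i.e. to $(q,q')\in T_{1,Q}$. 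Adding the two contributions and invoking parts (2) and (3) of Lemma 1 (applied to $\Omega$ and to $T_{1,Q}$ respectively) gives
\[N_{Q,\text{odd}}(1)=\frac{2}{\pi^2}\cdot\frac{Q^2}{2}+\frac{2}{\pi^2}\cdot\frac{Q^2}{6}+O(Q\log Q)=\frac{4Q^2}{3\pi^2}+O(Q\log Q),\]
which equals $\left(\frac{2Q^2}{\pi^2}\right)\left(\frac{4}{1\cdot 2\cdot 3}\right)+O(Q\log Q)$, as the theorem predicts at $k=1$.

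Next, for $k>1$, a pair $a/q<a''/q''$ consecutive in $\mathcal{F}_{Q,\text{odd}}$ with $\Delta=k>1$ cannot be consecutive in $\mathcal{F}_Q$, so by the same parity bookkeeping there is exactly one $\mathcal{F}_Q$-fraction $a'/q'$ strictly between them, and $q'$ is even. Letting $r$ be the integer from fact (3) with $q+q''=rq'$ and $a+a''=ra'$, substitution of $q''=rq'-q$ and $a''=ra'-a$ into $qa''-aq''$ collapses the expression to $r(qa'-aq')=r$, so $k=r$ and hence $q+q''=kq'$. By fact (4) this means $\left[\frac{Q+q}{q'}\right]=k$, i.e. $kq'\le Q+q<(k+1)q'$, so the denominator pairs $(q,q')$ run exactly over the lattice points of $T_{k,Q}$ with $\mathop{\mathrm{GCD}}(q,q')=1$, $q$ odd and $q'$ even. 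Part (3) of Lemma 1 together with the Section 2 area computation then yields $N_{Q,\text{odd}}(k)=\frac{2}{\pi^2}\mathop{\mathrm{Area}}(T_{k,Q})+O(Q\log Q)=\left(\frac{2Q^2}{\pi^2}\right)\left(\frac{4}{k(k+1)(k+2)}\right)+O(Q\log Q)$, which, together with the $k=1$ computation, proves the theorem.

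The main obstacle is the structural input used in both steps: establishing that a pair consecutive in $\mathcal{F}_{Q,\text{odd}}$ but not in $\mathcal{F}_Q$ has exactly one $\mathcal{F}_Q$-fraction between its endpoints, that this fraction must have even denominator, and that the index $r$ of fact (3) coincides with $\Delta$. One must also verify that the correspondence sending a consecutive pair to its pair of denominators is a genuine bijection onto the lattice points described above, so that nothing is double-counted and the error term is truly $O(Q\log Q)$, uniformly in $k$. Once that bookkeeping is in place, the theorem follows from the area arithmetic already carried out in Section 2 and three direct applications of Lemma 1.
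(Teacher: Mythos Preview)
Your proposal is correct and follows essentially the same route as the paper: split off $k=1$ into the odd--odd consecutive case (the triangle of area $Q^2/2$) plus the odd--even case with $r=1$ (the region $T_{1,Q}$), handle $k>1$ by showing $\Delta=r$ and reducing to the odd--even lattice count in $T_{k,Q}$, and then invoke Lemma~1 and the area formulas of Section~2. Your derivation of $\Delta=r$ via direct substitution $qa''-aq''=q(ra'-a)-a(rq'-q)=r(qa'-aq')=r$ is in fact cleaner than the paper's system-of-equations argument, but otherwise the two proofs coincide.
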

  Putting this together with our approximation for
  $\#\mathcal{F}_{Q,\text{odd}}$,
  we have proved the following result.
  \begin{corollary}
  For any positive integer
  $k$,
  \[\rho _{\text{odd}}(k)=\lim_{Q\to \infty}\frac{N
  _{Q,\text{odd}}(k)}{\#\mathcal{F}_{Q,\text{odd}}}=\frac
  {4}{k(k+1)(k+2)}.\]
  \end{corollary}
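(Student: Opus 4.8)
The plan is to combine the two asymptotic estimates already established. Theorem~1 gives
\[
N_{Q,\text{odd}}(k) = \left( \frac{2Q^2}{\pi ^2}\right) \left( \frac{4}{k(k+1)(k+2)}\right) + O(Q\log Q),
\]
and the cardinality estimate obtained immediately after the proof of Lemma~1 gives
\[
\#\mathcal{F}_{Q,\text{odd}} = \frac{2Q^2}{\pi ^2} + O(Q\log Q).
\]
First I would form the quotient $N_{Q,\text{odd}}(k)/\#\mathcal{F}_{Q,\text{odd}}$ and divide numerator and denominator by the common factor $2Q^2/\pi^2$, rewriting it as
\[
\frac{N_{Q,\text{odd}}(k)}{\#\mathcal{F}_{Q,\text{odd}}} = \frac{\dfrac{4}{k(k+1)(k+2)} + O\!\left(\dfrac{\log Q}{Q}\right)}{1 + O\!\left(\dfrac{\log Q}{Q}\right)}.
\]

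Next I would let $Q\to\infty$. Since $\log Q / Q \to 0$, both error terms tend to $0$; the denominator tends to $1$ and the numerator tends to $4/(k(k+1)(k+2))$. Hence the limit defining $\rho_{\text{odd}}(k)$ exists and equals $4/(k(k+1)(k+2))$, which is exactly the asserted value, matching the area computation $\mathop{\mathrm{Area}}(T_{k,Q}) = 4Q^2/(k(k+1)(k+2))$ from Section~2 (and $Q^2/6$ when $k=1$, which is consistent after including the contribution of pairs with both denominators odd).

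The only point requiring a word of justification is that the $O(Q\log Q)$ terms are genuinely of smaller order than the main term $2Q^2/\pi^2$, so that they are absorbed harmlessly in the limit; this is clear because $Q\log Q = o(Q^2)$. Consequently there is no real obstacle here: the Corollary is a direct consequence of Theorem~1 together with the estimate for $\#\mathcal{F}_{Q,\text{odd}}$, the two error terms washing out as $Q\to\infty$. (The implied constants may depend on $k$, but $k$ is fixed while $Q\to\infty$, so this causes no difficulty.)
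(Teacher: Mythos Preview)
Your proposal is correct and follows exactly the paper's approach: the paper simply states that ``putting this together with our approximation for $\#\mathcal{F}_{Q,\text{odd}}$'' yields the corollary, and you have spelled out precisely that division of the two asymptotic formulas and passage to the limit. Your added remarks about the error terms and the dependence on $k$ are accurate and merely make explicit what the paper leaves implicit.
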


  \section{Intermediate Lemmas and Computations}
  In the final two sections, we will generalize the results
  of the previous sections by considering the Farey
  fractions of order
  $Q$
  with odd denominators which are contained in an interval
  $[\alpha, \beta]\subseteq [0,1]$.
  For
  $k\in\mathbb{N}$,
  we define
  \begin{eqnarray}
     \mathcal{F}_{Q, [\alpha, \beta]} &=& \left\{ \frac{a}
     {q}\in\mathcal{F}_{Q} : \frac{a}{q}\in [\alpha,
     \beta]\right\}, \nonumber
  \\ \mathcal{F}_{Q,\text{odd},[\alpha, \beta]} &=&
     \left\{\frac{a}{q}\in\mathcal{F}_{Q,\text{odd}} : \frac{a}
     {q}\in [\alpha, \beta]\right\}, \nonumber
  \\ N_{Q,\text{odd},[\alpha , \beta]}(k) &=&
     \#\left\{\frac{a}{q}<\frac{a'}{q'} \text{
     consecutive in } \mathcal{F}_{Q,\text{odd},[\alpha
     ,\beta]} : qa'-aq'=k\right\}, \nonumber
  \end{eqnarray}
  and
  \begin{eqnarray}
     \rho _{\text{odd},[\alpha, \beta]}(k) &=& \lim
     _{Q\to\infty} \frac{N_{Q,\text{odd},[\alpha
     ,\beta]}(k)}{\#\mathcal{F}_{Q,\text{odd},[\alpha
     ,\beta]}}, \nonumber
  \end{eqnarray}
  assuming that the above limit exists. First, we will
  compute
  $\#\mathcal{F}_{Q,\text{odd},[\alpha, \beta]}$.
  Start by defining the region
  \[\Omega = \left\{ (x,y)\in \mathbb{R}^2 : 0<x\leq Q,
  \alpha x\leq y\leq \beta x\right\}.\]
  It is easy to find that
  $\mathop{\mathrm{Area}}(\Omega )=(\beta -\alpha )Q^2/2$.
  Using this fact, we have
  \begin{eqnarray}
     \#\mathcal{F}_{Q,\text{odd},[\alpha, \beta]}
     &=& \#\{ (q,a) : 1\leq q\leq Q, \alpha\leq \frac{a}
     {q}\leq\beta, \, \mathop{\mathrm{GCD}}(a,q)=1, q \text{ odd}\}
     \nonumber
  \\ &=& \#\{ (x,y)\in(\mathbb{Z}^2\cap\Omega ): \,
     \mathop{\mathrm{GCD}}(x,y)=1, x \text{ odd}\}
     \nonumber
  \\ &=& N_{\text{odd}}(\Omega ) \nonumber
  \\ &=& \frac{4}{\pi ^2}\mathop{\mathrm{Area}}(\Omega )+O(Q\log Q) \nonumber
  \\ &=& \frac{2(\beta -\alpha )Q^2}{\pi ^2}+O(Q\log
     Q). \nonumber
  \end{eqnarray}
  Next, using well known estimates for Kloosterman sums
  ([8],[3]) one obtains the following lemma (see also
  [2, Lemma 1.7]).
  \begin{lemma}
  Given a positive integer
  $q$,
  let
  $\mathcal{I}, \mathcal{J}\subseteq\{0,1,\ldots ,q-1\}$
  be sets of consecutive integers. For any positive integer
  $a$
  such that
  $\mathop{\mathrm{GCD}}(a,q)=1$
  define
  \[N_{q}(\mathcal{I}, \mathcal{J}, a)=\#\{(x,y) :
  x\in\mathcal{I}, y\in\mathcal{J}, xy\equiv a\mod q\}.\]
  Then for all $\epsilon >0$ we have
  \[N_{q}(\mathcal{I}, \mathcal{J}, a)=\frac
  {\varphi (q)|\mathcal{I}||\mathcal{J}|}
  {q^2}+O_\epsilon (q^{1/2+\epsilon}).\]
  \end{lemma}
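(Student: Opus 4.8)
The plan is to reduce the counting problem $N_q(\mathcal{I},\mathcal{J},a)$ to an exponential sum via orthogonality of additive characters, and then to bound the resulting sum using the Weil estimate for Kloosterman sums. First I would write, for each pair $(x,y)$ with $x\in\mathcal{I}$, $y\in\mathcal{J}$ and $\mathop{\mathrm{GCD}}(xy,q)$ irrelevant (we only need $xy\equiv a$, which forces $\mathop{\mathrm{GCD}}(x,q)=\mathop{\mathrm{GCD}}(y,q)=1$ since $\mathop{\mathrm{GCD}}(a,q)=1$), the detector
\[
\frac{1}{q}\sum_{h=0}^{q-1} e\!\left(\frac{h(xy-a)}{q}\right)
= \begin{cases} 1 & xy\equiv a \bmod q,\\ 0 & \text{otherwise.}\end{cases}
\]
Summing over $x\in\mathcal{I}$ and $y\in\mathcal{J}$ and separating the $h=0$ term gives the main term
\[
N_q(\mathcal{I},\mathcal{J},a) = \frac{\varphi(q)|\mathcal{I}||\mathcal{J}|}{q^2} + \frac{1}{q}\sum_{h=1}^{q-1} e\!\left(\frac{-ha}{q}\right)\left(\sum_{x\in\mathcal{I}} e\!\left(\frac{hx}{q}\right)\right)\!\left(\sum_{y\in\mathcal{J}} ???\right),
\]
where the subtlety is that the inner sum over $y$ does not factor directly: the constraint is multiplicative ($xy\equiv a$), not additive.

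The standard fix, which I would carry out next, is to first sum over $y$ in the residue $xy\equiv a$: since $x$ is invertible mod $q$, for fixed $x$ the condition determines $y\bmod q$, but $\mathcal{J}$ is an interval of consecutive integers inside $\{0,\dots,q-1\}$, so one should instead introduce a second character sum to detect membership of $y$ in $\mathcal{J}$. Concretely, write $N_q(\mathcal{I},\mathcal{J},a)=\sum_{x\in\mathcal{I}}^{*}\mathbf{1}_{\mathcal{J}}(a\bar{x})$ where $\bar{x}$ is the inverse of $x$ mod $q$ and the star denotes $\mathop{\mathrm{GCD}}(x,q)=1$; then expand $\mathbf{1}_{\mathcal{J}}$ using additive characters. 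After rearranging, the error term becomes a double sum over $h,h'$ of complete Kloosterman sums
\[
S(m,n;q)=\sum_{\substack{x=1\\ \mathop{\mathrm{GCD}}(x,q)=1}}^{q} e\!\left(\frac{mx+n\bar{x}}{q}\right),
\]
for which the Weil bound $|S(m,n;q)|\leq d(q)\,\mathop{\mathrm{GCD}}(m,n,q)^{1/2}\,q^{1/2}$ holds, $d(q)$ being the divisor function.

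The remaining steps are the routine bookkeeping: bound the geometric-type character sums $\sum_{x\in\mathcal{I}}e(hx/q)$ by $\min(|\mathcal{I}|, \|h/q\|^{-1})$, insert the Weil bound for the Kloosterman sums, and sum the resulting expression over $h$ and $h'$. The harmonic sums $\sum_{h=1}^{q-1}\|h/q\|^{-1}\ll q\log q$ combine with the $q^{1/2}$ and the divisor factor $d(q)\ll_\epsilon q^\epsilon$ to produce an overall error of size $O_\epsilon(q^{1/2+\epsilon})$, absorbing the logarithmic factors into the $\epsilon$. I would also remark that this lemma is essentially classical — the cited references ([8],[3], and [2, Lemma 1.7]) contain the same estimate — so the exposition can be kept brief, referring the reader there for the detailed manipulation of the double character sum.

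The main obstacle I anticipate is handling the non-factoring of the constraint $xy\equiv a$ cleanly: one must commit to the "invert one variable" trick and keep careful track of which variables are being detected by characters versus being summed freely, so that the complete exponential sum that emerges is genuinely a Kloosterman sum (with the right arguments) rather than something messier. Once that reduction is set up correctly, everything downstream is a standard application of Weil's bound and elementary estimates for incomplete geometric sums.
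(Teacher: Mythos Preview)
Your proposal is correct and is precisely the approach the paper has in mind: the paper does not give its own proof of this lemma but simply states that it follows from ``well known estimates for Kloosterman sums'' and cites Weil, Estermann, and \cite{fB00} (Lemma~1.7) --- exactly the reduction via additive characters to complete Kloosterman sums and the Weil bound that you outline. Your remark that the exposition can be kept brief with a pointer to those references is in fact what the paper does; the only cosmetic point is that your first displayed identity is a false start (the $h=0$ term of the naive detector gives $|\mathcal I||\mathcal J|/q$, not the stated main term with $\varphi(q)$), but you immediately correct course with the ``invert one variable'' approach, which is the right one.
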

  This lemma will be used repeatedly in the following
  section. A second lemma which we will borrow is the
  following (for a proof, see [2, Lemma 2.3]).
  \begin{lemma}
  For any function
  $f(q)$
  which is
  $C^1$
  on
  $[1,Q]$
  we have:
  \begin{eqnarray}
     \sum_{q=1}^Q\frac{\varphi (q)}{q}f(q)=\frac{6}
     {\pi ^2}\int_1^Q f(q)dq+O\left(\| f\| _\infty  \log
     Q+\log Q\int_1^Q |f'(q)|dq\right). \nonumber
  \end{eqnarray}
  \end{lemma}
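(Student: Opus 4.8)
The plan is to combine the classical average estimate
\[
\sum_{q\le x}\frac{\varphi(q)}{q}=\frac{6}{\pi^2}x+O(\log x)
\]
with Abel (partial) summation against $f$. This estimate itself follows from the Dirichlet convolution identity $\varphi(q)/q=\sum_{d\mid q}\mu(d)/d$ by reversing the order of summation,
\[
\sum_{q\le x}\frac{\varphi(q)}{q}=\sum_{d\le x}\frac{\mu(d)}{d}\left\lfloor\frac{x}{d}\right\rfloor=x\sum_{d\le x}\frac{\mu(d)}{d^2}+O\left(\sum_{d\le x}\frac1d\right)=\frac{x}{\zeta(2)}+O(\log x),
\]
where one uses $\lfloor x/d\rfloor=x/d+O(1)$, the tail bound $\sum_{d>x}d^{-2}=O(1/x)$, and $\zeta(2)=\pi^2/6$.

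Next, write $A(t)=\sum_{q\le t}\varphi(q)/q$ and $E(t)=A(t)-\frac{6}{\pi^2}t$, so that $E(t)=O(1+\log t)$ and in particular $|E(t)|\ll\log Q$ uniformly for $t\in[1,Q]$ once $Q$ is large. Partial summation gives
\[
\sum_{q=1}^{Q}\frac{\varphi(q)}{q}f(q)=A(Q)f(Q)-\int_1^Q A(t)f'(t)\,dt,
\]
and substituting $A(t)=\frac{6}{\pi^2}t+E(t)$ splits the right-hand side into the main term $\frac{6}{\pi^2}\bigl(Qf(Q)-\int_1^Q tf'(t)\,dt\bigr)$ and the error $E(Q)f(Q)-\int_1^Q E(t)f'(t)\,dt$. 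The integration by parts $\int_1^Q tf'(t)\,dt=Qf(Q)-f(1)-\int_1^Q f(t)\,dt$ collapses the main term to $\frac{6}{\pi^2}\int_1^Q f(t)\,dt+\frac{6}{\pi^2}f(1)$, and the spurious constant is absorbed into $O(\|f\|_\infty)$.

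It then remains to bound the two error terms. First, $|E(Q)f(Q)|\ll\|f\|_\infty\log Q$. Second,
\[
\left|\int_1^Q E(t)f'(t)\,dt\right|\le\Bigl(\sup_{1\le t\le Q}|E(t)|\Bigr)\int_1^Q|f'(t)|\,dt\ll\log Q\int_1^Q|f'(t)|\,dt,
\]
the integral $\int_1^Q|f'(t)|\,dt$ being finite because $f$ is $C^1$ on the compact interval $[1,Q]$. Assembling the main term together with these two bounds gives the claimed formula.

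The only points that genuinely require care are the uniformity of the estimate $|E(t)|\ll\log Q$ throughout $[1,Q]$ — near $t=1$ one must use $E(t)=O(1+\log t)$ rather than $O(\log t)$, since $E(1)=1-6/\pi^2\neq 0$ — and keeping track of the boundary contributions in the two integrations by parts. Neither presents a real analytic obstacle; the heart of the matter is just the Mertens-type average $\sum_{q\le x}\varphi(q)/q=\frac{6}{\pi^2}x+O(\log x)$, after which everything is bookkeeping.
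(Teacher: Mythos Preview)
Your proof is correct. The paper itself does not prove this lemma at all; it merely cites \cite{fB00}, Lemma 2.3, and borrows the result wholesale. Your argument---the Mertens-type estimate $\sum_{q\le x}\varphi(q)/q=\tfrac{6}{\pi^2}x+O(\log x)$ via the convolution $\varphi(q)/q=\sum_{d\mid q}\mu(d)/d$, followed by Abel summation against $f$---is exactly the standard route and is essentially how the cited reference proceeds. The bookkeeping (boundary term $\tfrac{6}{\pi^2}f(1)$ absorbed into $O(\|f\|_\infty)$, and the care that $E(t)=O(1+\log t)$ rather than $O(\log t)$ near $t=1$) is handled correctly.
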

  It will be helpful to use Lemma 3 to compute two sums
  before proceeding to the final section. Define our first
  sum as
  \[F(Q)=\sum_{\substack{q=1 \\ q\text{ odd}}}^Q \varphi
  (q).\]
  By Lemma 3 we have
  \begin{eqnarray}
     \Phi (Q)=\sum_{q=1}^Q \varphi (q)=\frac{3Q^2}
     {\pi ^2}+O(Q\log Q).\nonumber
  \end{eqnarray}
  Using this result, we have
  \begin{eqnarray}
     \Phi (Q)-F(Q) &=& \sum_{1\leq q\leq Q/2}\varphi (2q)
     \nonumber
  \\ &=& \sum_{\substack{1\leq q\leq Q/2\\q\text{
     odd}}}\varphi(q) +2\sum_{\substack{1\leq q\leq
     Q/2\\q\text{ even}}}\varphi (q) \nonumber
  \\ &=& F\left( \frac{Q}{2}\right) +2\left(
     \Phi\left(\frac{Q}{2}\right) -F\left(\frac{Q}
     {2}\right)\right) \nonumber
  \\ &=& 2\Phi\left(\frac{Q}{2}\right)-F\left(\frac{Q}
     {2}\right), \nonumber
  \end{eqnarray}
  and therefore
  \begin{eqnarray}
     F(Q)-F\left(\frac{Q}{2}\right) &=& \Phi
     (Q)-2\Phi\left(\frac{Q}{2}\right) \nonumber
  \\ &=& \frac{3Q^2}{2\pi ^2}+O(Q\log Q). \nonumber
  \end{eqnarray}
  To find
  $F(Q)$
  explicitly, write
  \begin{eqnarray}
     F(Q) &=& \sum_{n=0}^{\lfloor\log_2  Q\rfloor+1}
     \left( F\left(\frac{Q}{2^n}\right) -F\left(\frac{Q}
     {2^{n+1}}\right)\right)
  \\ &=& \sum_{n=0}^{\lfloor\log_2  Q\rfloor +1}
     \left(\frac{1}{4^n}\cdot\frac{3Q^2}{2\pi ^2}
     +O(Q\log Q)\right) \nonumber
  \\ &=& \frac{3Q^2}{2\pi ^2}\sum_{n=0}^{\lfloor\log_2
     Q\rfloor +1}\frac{1}{4^n}+O(Q\log^2 Q) \nonumber
  \\ &=& \frac{3Q^2}{2\pi ^2}\cdot\frac{4}{3}+O(Q\log^2 Q)
     \nonumber
  \\ &=& \frac{2Q^2}{\pi ^2}+O(Q\log^2 Q). \nonumber
  \end{eqnarray}
  The second sum that we need to consider is
  \[G_1(Q)=\sum_{\substack{q=1 \\ q\text{ odd}}}^Q
  \frac{\varphi (q)}{q}.\]
  Using the same method as before, we apply Lemma 3 to
  compute
  \begin{eqnarray}
     G_2(Q)=\sum_{q=1}^Q \frac{\varphi (q)}{q}=\frac{6Q}
     {\pi ^2}+O(\log Q).\nonumber
  \end{eqnarray}
  Then we have
  \begin{eqnarray}
     G_1(Q)-\frac{1}{2}G_1\left( \frac{Q}
     {2}\right) &=& G_2(Q)-G_2\left( \frac{Q}{2}\right)
     \nonumber
  \\ &=& \frac{3Q}{\pi ^2}+O(\log Q), \nonumber
  \end{eqnarray}
  and using a similar technique as in the last sum produces
  \begin{eqnarray}
     G_1(Q) &=& \frac{4Q}{\pi ^2}+O(\log^2 Q). \nonumber
  \end{eqnarray}

  \section{Asymptotic Results in Subintervals of $[0,1]$}
  Now we will consider the values of the numbers and the
  limits defined at the beginning of the previous section.
  The same arguments used to sort out the possible cases at
  the beginning of Section 4 apply here. The main
  differences with the computations here are the use of
  the implication
  \[qa'-aq'=1 \Rightarrow a\equiv -\overline{q'}\mod q \]
  (where
  $q'\overline{q'}\equiv 1\mod q$)
  and the fact that if
  $a/q<a'/q'$
  are consecutive in
  $\mathcal{F}_Q$,
  then the following statement is true, except when
  $a/q$
  is the largest element of
  $\mathcal{F}_{Q,[\alpha ,\beta]}$:
  \begin{eqnarray}
     \alpha\leq \frac{a}{q}<\frac{a'}{q'}\leq\beta
     &\text{if and only if} & a\in [\alpha q, \beta q].
     \nonumber
  \end{eqnarray}
  Arguing as in Section 4, we begin with
  $N_{Q,\text{odd},[\alpha , \beta]}(1)$
  as a special case:
  \begin{align}
     \# & \left\{\frac{a}{q}<\frac{a'}{q'} \text{
     consecutive in }\mathcal{F}_{Q,[\alpha , \beta]} : q
     \text{ odd}, q' \text{ odd}\right\} \nonumber
  \\ =&\#\bigg\{ (q,q') : 1\leq q, q'\leq Q, q+q'>Q, \,
     \mathop{\mathrm{GCD}}(q,q')=1, \nonumber
  \\ &\qquad\qquad\qquad\qquad\qquad q \text{ odd}, q'
     \text{ odd}, \alpha\leq\frac{a}{q}<\frac{a'}
     {q'}\leq\beta\bigg\} \nonumber
  \\ =&\#\left\{ (q,q') : 1\leq q, q'\leq Q, q+q'>Q,
     \, \mathop{\mathrm{GCD}}(q,q')=1, \right. \nonumber
  \\ &\qquad\qquad\qquad\qquad\qquad\left. q \text{ odd},
     q' \text{ odd},-\overline{q'}\mod q\in[\alpha
     q,\beta q]\right\} +O(1). \nonumber
  \end{align}
  Making the substitution
  $x=q', y=-\overline{q'}, 2z=q'$,
  the last expression can be written as
  \begin{eqnarray}
     &=& \sum_{\substack{q=1 \\ q\text{ odd}}}^Q
     \#\{(x,y) : x\in [Q-q, Q], y\in [\alpha q, \beta q],
     xy\equiv -1\mod q\} \nonumber
  \\ & & -\sum_{\substack{q=1 \\ q\text{ odd}}}^Q
     \#\left\{ (z,y) : z\in \left[\frac{Q-q}{2}, \frac{Q}
     {2}\right] , y\in [\alpha q, \beta q], zy\equiv
     -\overline{2}\mod q\right\} \nonumber
  \\ & & +\quad O(1). \nonumber
  \end{eqnarray}
  Choosing any $\epsilon >0$ and applying Lemma 2 to approximate the numbers inside these sums, this becomes
  \begin{eqnarray}
     &=& \sum_{\substack{q=1 \\ q\text{ odd}}}^Q \left(
     \varphi (q)(\beta -\alpha)-\varphi (q)\frac{\beta
     -\alpha}{2}+O_{\epsilon}(q^{1/2+\epsilon})\right)
     +O(1) \nonumber
  \\ &=& \left(\frac{\beta -\alpha}{2}\sum_{\substack
     {q=1\\ q\text{ odd}}}^Q \varphi (q)\right)
     +O_{\epsilon}(Q^{3/2+\epsilon}). \nonumber
  \end{eqnarray}
  Using (1), we rewrite the preceding expression as
  \begin{eqnarray}
     &=& \frac{\beta -\alpha}{2}\left(\frac{2Q^2}{\pi
     ^2}+O(Q\log^2 Q) \right) +O_{\epsilon}(Q
     ^{3/2+\epsilon}) \nonumber
  \\ &=& \frac{(\beta -\alpha)Q^2}{\pi ^2}+O_{\epsilon}(Q
     ^{3/2+\epsilon}). \nonumber
  \end{eqnarray}
  Next we need to compute the number
  \begin{align}
     \# &\left\{ \frac{a}{q}<\frac{a'}{q'} \text{
     consecutive in }\mathcal{F}_Q : q \text{ odd}, q'
     \text{ even}, \left[ \frac{Q+q}{q'}\right]=1, a\in
     [\alpha q, \beta q]\right\} \nonumber
  \\ & =\#\{ (q,q')\in T_{1,Q} : q \text{ odd}, q' \text{
     even}, \mathop{\mathrm{GCD}}(q,q')=1, -\overline{q'}\mod q\in[\alpha
     q,\beta q]\}. \nonumber
  \end{align}
  Substituting
  $2x=q', y=-\overline{q'}$
  we write this number as
  \begin{eqnarray}
     &=& \sum_{\substack{1\leq q\leq Q/3\\ q\text{
     odd}}}\#\left\{ (x,y) : x\in \left[\frac{Q-q}{2},
     \frac{Q}{2}\right] , y\in [\alpha q, \beta q],
     xy\equiv -\overline{2}\mod q\right\} \nonumber
  \\ & & +\sum_{ \substack{Q/3<q\leq Q\\ q\text{
     odd}}}\#\left\{ (x,y) : x\in \left[\frac{Q+q}{4},
     \frac{Q}{2}\right] , y\in [\alpha q, \beta q],
     xy\equiv -\overline{2}\mod q\right\}. \nonumber
  \end{eqnarray}
  For notational purposes, define the operators
  \begin{eqnarray}
     \sideset{}{_1}{\sum} &=& \sum_{\substack{1\leq
     q\leq Q/3\\ q\text{ odd}}} \nonumber
  \end{eqnarray}
  and
  \begin{eqnarray}
     \sideset{}{_2}{\sum} &=& \sum_{\substack{Q/3<
     q\leq Q\\ q\text{ odd}}}=\sum_{\substack{1\leq
     q\leq Q\\ q\text{ odd}}}-\sum_{\substack{1\leq
     q\leq Q/3\\ q\text{ odd}}}, \nonumber
  \end{eqnarray}
  then apply Lemma 2 and rewrite the above sums as
  \begin{eqnarray}
     &=& \sideset{}{_1}{\sum} \left( \frac{(\beta
     -\alpha)\varphi (q)}{2} +O_{\epsilon }(q
     ^{1/2+\epsilon })\right) \nonumber
  \\ & & +\sideset{}{_2}{\sum} \left(\frac{\varphi (q)}{q}\cdot\frac
     {(\beta -\alpha )(Q-q)}{4}+O_{\epsilon }(q
     ^{1/2+\epsilon })\right) \nonumber
  \\ &=& \left(\frac{\beta -\alpha}{2}\sideset{}{_1}{\sum} \varphi
     (q)\right) +\left( \frac{(\beta -\alpha)Q}{4}\sideset
     {}{_2}{\sum} \frac{\varphi (q)}{q}\right)
  \\ & & -\left(\frac{\beta -\alpha}{4}\sideset{}{_2}{\sum} \varphi
     (q)\right) +O_{\epsilon }(Q^{3/2+\epsilon }).
     \nonumber
  \end{eqnarray}
  Using (1) and (2), we compute
  \begin{eqnarray}
     \sideset{}{_1}{\sum} \varphi (q) &=& \frac{2Q^2}{9\pi
     ^2}+O(Q\log^2 Q), \nonumber
  \\ \sideset{}{_2}{\sum} \varphi (q) &=& \frac{2}{\pi ^2}\left( Q
     ^2-\frac{Q^2}{9}\right) +O(Q\log^2 Q) \nonumber
  \\ &=& \frac{16Q^2}{9\pi ^2} +O(Q\log^2 Q), \nonumber
  \end{eqnarray}
  and
  \begin{eqnarray}
     \sideset{}{_2}{\sum} \frac{\varphi (q)}{q} &=&
     \frac{4}{\pi ^2}\left( Q-\frac{Q}{3}\right) +O(\log^2
     Q)\nonumber
  \\ &=& \frac{8Q}{3\pi ^2} +O(\log^2 Q). \nonumber
  \end{eqnarray}
  Plugging these values into (3), we find that
  \begin{align}
     \# &\left\{ \frac{a}{q}<\frac{a'}{q'} \text{
     consecutive in }\mathcal{F}_Q : q \text{ odd}, q'
     \text{ even}, \left[ \frac{Q+q}{q'}\right]=1, a\in
     [\alpha q,\beta q]\right\} \nonumber
  \\ &= \frac{(\beta -\alpha)Q^2}{9\pi ^2}+\frac
     {2(\beta -\alpha)Q^2}{3\pi ^2}-\frac{4(\beta -\alpha)Q
     ^2}{9\pi ^2}+O(Q^{3/2+\epsilon })\nonumber
  \\ &= \frac{(\beta -\alpha)Q^2}{3\pi ^2}+O(Q
     ^{3/2+\epsilon} ).\nonumber
  \end{align}
  Now, we have
  \begin{eqnarray}
     N_{Q,\text{odd},[\alpha , \beta]}(1) &=&
     \#\left\{\frac{a}{q}<\frac{a'}{q'} \text{
     consecutive in } \mathcal{F_{Q,[\alpha
     ,\beta]}} : q \text{ odd}, q' \text{ odd}\right\}
     \nonumber
  \\ & & +\#\left\{ \frac{a}{q}<\frac{a'}{q'} \text{
     consecutive in } \mathcal{F}_Q : q \text{ odd}, q'
     \text{ even}, \right. \nonumber
  \\ & & \qquad\qquad\qquad\qquad\qquad \left. \left[ \frac{Q+q}
     {q'}\right]=1, a\in [\alpha q, \beta q]\right\}
     +O(1) \nonumber
  \\ &=& \frac{4(\beta -\alpha)Q^2}{3\pi ^2}+O_{\epsilon
     }(Q^{3/2+\epsilon }). \nonumber
  \end{eqnarray}
  Putting this together with our value for
  $\#\mathcal{F}_{Q,\text{odd},[\alpha ,\beta]}$,
  we find that
  \[\rho _{\text{odd},[\alpha ,\beta]}(1)=\frac{2}{3}.\]
  We will use similar arguments for the final computation.
  Take
  $k>1$,
  and we have
  \begin{eqnarray}
     N_{Q,\text{odd},[\alpha ,\beta]}(k) &=&
     \#\left\{ \frac{a}{q}<\frac{a'}{q'} \text{
     consecutive in }\mathcal{F}_Q : q \text{ odd}, q'
     \text{ even}, \right. \nonumber
  \\ & &\qquad\qquad\qquad\qquad \left.\left[\frac{Q+q}
     {q'}\right]=k, a\in [\alpha q, \beta q]\right\} +O(1)
     \nonumber
  \\ &=& \#\{ (q,q')\in T_{k,Q} : q \text{ odd}, q' \text
     { even}, \nonumber
  \\ & & \qquad\qquad\qquad\qquad \mathop{\mathrm
     {GCD}}(q,q')=1,-\overline{q'}\mod q\in[\alpha
     q,\beta q]\} \nonumber
  \\ & & +\quad O(1) \nonumber
  \end{eqnarray}
  Substitute
  $2x=q', y=-\overline{q'}$
  and write this number as
  \begin{eqnarray}
     &=& \sum_{\substack{(k-1)Q/(k+1)\leq q\leq
     kQ/(k+2)\\ q\text{ odd}}}\#\left\{ (x,y) : x\in
     \left[\frac{Q-q}{2}, \frac{Q+q}{2k}\right] , \right.
     \nonumber
  \\ & & \quad\qquad\qquad\qquad\qquad\qquad\qquad
     y\in\left[\alpha q,\beta q\right] , xy\equiv
     -\overline{2}\mod q\bigg\} \nonumber
  \\ & & +\sum_{\substack{kQ/(k+2)<q\leq Q\\q\text{
     odd}}}\#\left\{ (x,y) : x\in \left[\frac{Q+q}
     {2(k+1)}, \frac{Q+q}{2k}\right] , \right. \nonumber
  \\ & & \quad\qquad\qquad\qquad\qquad\qquad\qquad y\in
     \left[\alpha q,\beta q\right] , xy\equiv -\overline
     {2}\mod q\bigg\} \nonumber
  \\ & & +\quad O(1) \nonumber
  \end{eqnarray}
  Again, for notational purposes, define the operators
  \begin{eqnarray}
     \sideset{}{_3}{\sum} &=& \sum_{\substack
     {(k-1)Q/(k+1)\leq q\leq kQ/(k+2)\\ q\text{
     odd}}}=\sum_{\substack{1\leq q\leq kQ/(k+2)\\ q\text
     { odd}}}-\sum_{\substack{1\leq q<(k-1)Q/(k+1)\\
     q\text{ odd}}} \nonumber
  \end{eqnarray}
  and
  \begin{eqnarray}
     \sideset{}{_4}{\sum} &=& \sum_{\substack
     {kQ/(k+2)<q\leq Q\\ q\text{ odd}}}=\sum
     _{\substack{1\leq q\leq Q\\ q\text{ odd}}}-\sum
     _{\substack{1\leq q\leq kQ/(k+2)\\ q\text{ odd}}},
     \nonumber
  \end{eqnarray}
  and apply Lemma 2 to rewrite the above sums as
  \begin{eqnarray}
     &=& \sideset{}{_3}{\sum} \left(\frac{(\beta -\alpha)\varphi (q)}
     {q}\cdot\frac{Q+q-kQ+kq}{2k}+O_{\epsilon}(q
     ^{1/2+\epsilon})\right) \nonumber
  \\ & & +\quad\sideset{}{_4}{\sum} \left(\frac{(\beta
     -\alpha)\varphi (q)}{q}\cdot\frac{Q+q}
     {2k(k+1)} +O_{\epsilon}(q^{1/2+\epsilon})
     \right) +O(1) \nonumber
  \\ &=& \left(\frac{(\beta -\alpha)(1-k)Q}{2k}\sideset{}
     {_3}{\sum}\frac{\varphi (q)}{q}\right)+\left(\frac
     {(\beta -\alpha)(1+k)}{2k}\sideset{}{_3}{\sum}
     \varphi (q)\right)
  \\ & & + \left(\frac{(\beta -\alpha)Q}{2k(k+1)}\sideset
     {}{_4}{\sum}\frac{\varphi (q)}{q}\right)+\left(\frac
     {(\beta -\alpha)}{2k(k+1)}\sideset{}{_4}{\sum}
     \varphi (q)\right) \nonumber
  \\ & & +\quad O_{\epsilon}(Q^{3/2+\epsilon}). \nonumber
  \end{eqnarray}
  Using (1) and (2), we compute
  \begin{eqnarray}
     \sideset{}{_3}{\sum} \varphi (q) &=& \frac{2}{\pi ^2}\left(
     \left(\frac{kQ}{k+2}\right)^2-\left(\frac{(1-k)Q}
     {k+1}\right)^2\right) +O(Q\log^2 Q) \nonumber
  \\ &=& \frac{8Q^2}{\pi ^2}\cdot\frac{k^2+k-1}{(k+1)
     ^2(k+2)^2}+O(Q\log^2 Q), \nonumber
  \\ \sideset{}{_4}{\sum} \varphi (q) &=& \frac{2}{\pi ^2}\left(
     Q^2-\left(\frac{kQ}{k+2}\right) ^2\right) +O(Q\log^2
     Q)\nonumber
  \\ &=& \frac{8Q^2}{\pi ^2}\cdot\frac{k+1}{(k+2)^2}
     +O(Q\log^2 Q),\nonumber
  \\ \sideset{}{_3}{\sum} \frac{\varphi (q)}{q} &=& \frac{4}{\pi
     ^2}\left( \frac{kQ}{k+2}-\frac{(1-k)Q}{k+1}\right)
     +O(\log^2 Q)\nonumber
  \\ &=& \frac{8Q}{(k+1)(k+2)\pi ^2} +O(\log^2 Q),
     \nonumber
  \end{eqnarray}
  and
  \begin{eqnarray}
     \sideset{}{_4}{\sum} \frac{\varphi (q)}{q} &=& \frac
     {4}{\pi ^2}\left( Q-\frac{kQ}{k+2}\right) +O(\log^2
     Q) \nonumber
  \\ &=& \frac{8Q}{(k+2)\pi ^2} +O(\log^2 Q). \nonumber
  \end{eqnarray}
  We use these values to rewrite (4) as
  \begin{eqnarray}
     &=& \frac{(\beta -\alpha)Q}{2k}\left(\frac{8(1-k)Q}
     {(k+1)(k+2)\pi ^2}+\frac{8Q}{(k+1)(k+2)\pi ^2}\right)
     \nonumber
  \\ & & \qquad +\frac{\beta -\alpha}{2k}\left(\frac
     {8(1+k)Q^2}{\pi ^2}\cdot\frac{k^2+k-1}{(k+1)^2(k+2)
     ^2}+\frac{8Q^2}{(k+1)\pi ^2}\cdot\frac{k+1}{(k+2)
     ^2}\right) \nonumber
  \\ & & \qquad + O_{\epsilon}(Q^{3/2+\epsilon}) \nonumber
  \\ &=& \frac{2(\beta -\alpha)Q^2}{\pi ^2}\cdot\frac{4}
     {k(k+1)(k+2)}+O_{\epsilon}(Q^{3/2+\epsilon}).
     \nonumber
  \end{eqnarray}
  This proves the following theorem.
  \begin{theorem}
  For any positive integer
  $k$, any interval
  $[\alpha ,\beta ]\subseteq [0,1]$, and any $\epsilon >0,$
  \[N_{Q,\text{odd},[\alpha ,\beta]}(k)= \frac{2(\beta -\alpha)Q^2}{\pi^2}\cdot\frac{4}{k(k+1)(k+2)}+O_{\epsilon}(Q^{3/2+\epsilon}).\]
  \end{theorem}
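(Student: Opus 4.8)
The plan is to carry out the case analysis of Section~4 while also keeping track of the requirement that $a/q$ and $a'/q'$ lie in $[\alpha,\beta]$, and to remove that requirement by recasting it as a congruence condition on the numerator, which Lemma~2 can handle.

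First I would dispose of $k=1$ on its own, just as in Section~4. A consecutive pair in $\mathcal{F}_{Q,\text{odd},[\alpha,\beta]}$ with $\Delta=1$ is either a pair already consecutive in $\mathcal{F}_Q$ with $q$ and $q'$ both odd, or it arises from a triple $a/q<a'/q'<a''/q''$ consecutive in $\mathcal{F}_Q$ with $q,q''$ odd and $q'=q+q''$ (equivalently $[(Q+q)/q']=1$). In either case $qa'-aq'=1$ forces $a\equiv-\overline{q'}\bmod q$, and, up to an $O(1)$ term coming from the largest element of $\mathcal{F}_{Q,[\alpha,\beta]}$, the condition $\alpha\le a/q<a'/q'\le\beta$ becomes $a\in[\alpha q,\beta q]$. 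Each count therefore reduces to a sum over odd $q$ of quantities $\#\{(x,y):x\in\mathcal{I},\ y\in[\alpha q,\beta q],\ xy\equiv c\bmod q\}$ with $\mathcal{I}$ an explicit interval and $c$ a unit mod $q$; in the even-denominator pieces one substitutes $q'=2x$, which turns $y=-\overline{q'}$ into $xy\equiv-\overline{2}\bmod q$. Estimating each term by Lemma~2 (error $O_\epsilon(q^{1/2+\epsilon})$), summing, and evaluating the main terms with the sums $F$ and $G_1$ of Section~5, this should give $N_{Q,\text{odd},[\alpha,\beta]}(1)=\frac{4(\beta-\alpha)Q^2}{3\pi^2}+O_\epsilon(Q^{3/2+\epsilon})$, which is the asserted formula at $k=1$.

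For $k>1$ the argument is cleaner. By results (3) and (4) of Section~2, a consecutive pair with $\Delta=k>1$ comes from a triple with $q$ odd, $q'$ even, and $[(Q+q)/q']=k$, so one is counting $(q,q')\in T_{k,Q}$ with $q$ odd, $q'$ even, $\gcd(q,q')=1$, and $-\overline{q'}\bmod q\in[\alpha q,\beta q]$. Writing $q'=2x$ and describing the slice of $T_{k,Q}$ at a fixed odd $q$ as a segment in $x$, one finds the relevant $q$ range over $[(k-1)Q/(k+1),Q]$, split at $kQ/(k+2)$, the $x$-segment having length $\frac{(1-k)Q+(1+k)q}{2k}$ on the lower piece and $\frac{Q+q}{2k(k+1)}$ on the upper piece. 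Applying Lemma~2 to the inner count replaces it by $\frac{(\beta-\alpha)\varphi(q)}{q}$ times this length, plus $O_\epsilon(q^{1/2+\epsilon})$; the errors sum to $O_\epsilon(Q^{3/2+\epsilon})$. The main terms are then linear combinations of $\sum_{q\le cQ,\,q\text{ odd}}\varphi(q)$ and $\sum_{q\le cQ,\,q\text{ odd}}\varphi(q)/q$ for $c\in\{(k-1)/(k+1),k/(k+2),1\}$, which I would evaluate from $F(cQ)=\frac{2c^2Q^2}{\pi^2}+O(Q\log^2 Q)$ and $G_1(cQ)=\frac{4cQ}{\pi^2}+O(\log^2 Q)$. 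Collecting terms, the numerical factor is exactly $\frac{2}{\pi^2}\,\mathrm{Area}(T_{k,Q})=\frac{2}{\pi^2}\cdot\frac{4Q^2}{k(k+1)(k+2)}$, whence $N_{Q,\text{odd},[\alpha,\beta]}(k)=\frac{2(\beta-\alpha)Q^2}{\pi^2}\cdot\frac{4}{k(k+1)(k+2)}+O_\epsilon(Q^{3/2+\epsilon})$.

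The main obstacle is the algebraic bookkeeping rather than any new idea: one must pin down the $x$-segment endpoints on each piece of the $q$-range, carry the rational-in-$k$ coefficients (such as $(1-k)/(2k)$, $(1+k)/(2k)$, and $1/(2k(k+1))$) correctly through the sums, and check that the resulting polynomial in $k$ collapses to $4/(k(k+1)(k+2))$. One also has to confirm that the $O(1)$ boundary term, and any subranges of summation that degenerate for small $k$, do not contribute beyond $O_\epsilon(Q^{3/2+\epsilon})$.
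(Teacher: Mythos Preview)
Your outline is correct and follows essentially the same route as the paper: the same $k=1$ versus $k>1$ split, the same reduction of the interval constraint to the congruence $a\equiv -\overline{q'}\bmod q$, the same substitution $q'=2x$ for even $q'$, the same decomposition of the $q$-range according to the shape of $T_{k,Q}$ (split at $kQ/(k+2)$, with $x$-segment lengths $\tfrac{(1-k)Q+(1+k)q}{2k}$ and $\tfrac{Q+q}{2k(k+1)}$), and the same evaluation via Lemma~2 together with $F$ and $G_1$. The only place the paper is slightly more explicit is in the odd--odd subcase of $k=1$, where it enforces ``$q'$ odd'' by summing over all $q'$ and subtracting the even-$q'$ contribution via a second application of Lemma~2; your description covers this implicitly.
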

  Putting this together with our approximation for
  $\#\mathcal{F}_{Q,\text{odd},[\alpha
  ,\beta]}$, we have proved the following result.
  \begin{corollary}
  For any positive integer
  $k$
  and any interval
  $[\alpha , \beta]\subseteq [0,1]$,
  \[\rho _{\text{odd},[\alpha ,\beta]}(k)= \lim_{Q\to\infty} \frac{N_{Q,\text{odd},[\alpha
  ,\beta]}(k)}{\#\mathcal{F}_{Q,\text{odd},[\alpha
  ,\beta]}}=\frac{4}
  {k(k+1)(k+2)}.\]
  \end{corollary}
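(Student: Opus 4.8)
The plan is to deduce the corollary at once from Theorem~2 and the count of $\#\mathcal{F}_{Q,\text{odd},[\alpha,\beta]}$ obtained in Section~5. Theorem~2 gives
\[
N_{Q,\text{odd},[\alpha,\beta]}(k)=\frac{2(\beta-\alpha)Q^{2}}{\pi^{2}}\cdot\frac{4}{k(k+1)(k+2)}+O_{\epsilon}(Q^{3/2+\epsilon}),
\]
while in Section~5 we showed
\[
\#\mathcal{F}_{Q,\text{odd},[\alpha,\beta]}=\frac{2(\beta-\alpha)Q^{2}}{\pi^{2}}+O(Q\log Q).
\]
First I would form the quotient of these two expressions; the common factor $2(\beta-\alpha)Q^{2}/\pi^{2}$ in the two main terms cancels, leaving $4/(k(k+1)(k+2))$ together with a ratio of lower-order contributions.

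Next I would check that the error terms disappear in the limit. We may assume $\alpha<\beta$, since for a degenerate interval the set $\mathcal{F}_{Q,\text{odd},[\alpha,\beta]}$ has only $O(1)$ elements and $\rho_{\text{odd},[\alpha,\beta]}(k)$ is not defined. With $\alpha<\beta$ fixed, the denominator grows like a positive multiple of $Q^{2}$. Fixing any $\epsilon$ with $0<\epsilon<1/2$ in Theorem~2, the numerator's error term is $O(Q^{3/2+\epsilon})=o(Q^{2})$ and the denominator's error term is $O(Q\log Q)=o(Q^{2})$. Dividing numerator and denominator by $Q^{2}$ and letting $Q\to\infty$ makes both error terms vanish, and we obtain
\[
\lim_{Q\to\infty}\frac{N_{Q,\text{odd},[\alpha,\beta]}(k)}{\#\mathcal{F}_{Q,\text{odd},[\alpha,\beta]}}=\frac{\dfrac{8(\beta-\alpha)}{\pi^{2}k(k+1)(k+2)}}{\dfrac{2(\beta-\alpha)}{\pi^{2}}}=\frac{4}{k(k+1)(k+2)}.
\]

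I do not expect any substantive obstacle at this stage: all of the analytic work — the lattice-point estimates, the Kloosterman-sum input of Lemma~2, the sum evaluations of the preceding section, and their assembly in Theorem~2 — has already been carried out. The only point worth a remark is that the limiting frequency comes out independent of the choice of $[\alpha,\beta]$, and in fact equals the value $\rho_{\text{odd}}(k)$ found for $[\alpha,\beta]=[0,1]$ in Corollary~1; this is exactly the effect of the factor $\beta-\alpha$ cancelling between Theorem~2 and the formula for $\#\mathcal{F}_{Q,\text{odd},[\alpha,\beta]}$.
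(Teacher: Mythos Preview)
Your proposal is correct and follows exactly the paper's approach: the corollary is deduced immediately by dividing the asymptotic of Theorem~2 by the count $\#\mathcal{F}_{Q,\text{odd},[\alpha,\beta]}=\frac{2(\beta-\alpha)Q^{2}}{\pi^{2}}+O(Q\log Q)$ from Section~5 and letting $Q\to\infty$. Your added remarks on the degenerate case $\alpha=\beta$ and on the cancellation of the factor $\beta-\alpha$ are accurate and go slightly beyond what the paper states explicitly.
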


\end{document}